\newcommand{\nn}{\mathbb N}
\newcommand{\ee}{\mathbb E}
\newcommand{\rr}{\mathbb R}
\newtheorem{theorem}{Theorem}[section]
\newtheorem*{lemma}{Lemma}
\begin{document}
\title[Strassen's Algorithm for Matrices of Arbitrary Size]
{Strassen's Matrix Multiplication Algorithm for Matrices of Arbitrary Order}

\author[I. Hedtke]{Ivo Hedtke}

\address{Mathematical Institute, University of Jena, D-07737 Jena, Germany}
\email{Ivo.Hedtke@uni-jena.de}
\thanks{The author was supported by the Studienstiftung des Deutschen
Volkes.}

\subjclass[2010]{Primary 65F30; Secondary 68Q17}
\keywords{Fast Matrix Multiplication, Strassen Algorithm}

\begin{abstract}
The well known algorithm of \textsc{Volker Strassen} for matrix multiplication
can only be used for $(m2^k \times m2^k)$ matrices. For arbitrary $(n \times
n)$ matrices one has to add zero rows and columns to the given matrices to use
\textsc{Strassen}'s algorithm. \textsc{Strassen} gave a strategy of how to set
$m$ and $k$ for arbitrary $n$ to ensure $n\leq m2^k$. In this paper we study the
number $d$ of additional zero rows and columns and the influence on the number
of flops used by the algorithm in the worst case ($d=n/16$), best case ($d=1$)
and in the average case ($d\approx n/48$). The aim of this work is to give a
detailed analysis of the number of additional zero rows and columns and the
additional work caused by \textsc{Strassen}'s bad parameters. 
\textsc{Strassen} used the parameters $m$ and $k$ to
show that his matrix multiplication algorithm needs less than $4.7n^{\log_2 7}$
flops. We can show in this paper, that these parameters cause an additional
work of approximately 20 \%  in the worst case in comparison to the optimal strategy
for the worst case. This is the main reason for the search for better
parameters.
\end{abstract}

\maketitle
\numberwithin{equation}{section}

\section{Introduction}
\noindent In his paper ``\emph{Gaussian Elimination is not Optimal}''
(\cite{StrassenGauss}) \textsc{Volker Strassen} developed a recursive algorithm
(we will call it $\mathcal S$) for multiplication of square matrices of
order $m2^k$. The algorithm itself is described below. Further details can be
found in \cite[p.~31]{Golub}.

Before we start with our analysis of the parameters of
\textsc{Strassen}'s algorithm we will have a short look on the history of fast matrix
multiplication. The naive algorithm for matrix multiplication is an $\mathcal 
O(n^3)$ algorithm. In $1969$ \textsc{Strassen} showed that there is an $\mathcal
O(n^{2.81})$ algorithm for this problem. \textsc{Shmuel Winograd} optimized
\textsc{Strassen}'s algorithm. While the \textsc{Strassen-Winograd} algorithm is
a variant that is always implemented (for example in the famous GEMMW package),
there are faster ones (in theory) that are impractical to implement. The fastest
known algorithm, devised in 1987 by \textsc{Don Coppersmith} and
\textsc{Winograd}, runs in $\mathcal O(n^{2.38})$ time. There is also an
interesting group-theoretic approach to fast matrix multiplication from
\textsc{Henry Cohn} and \textsc{Christopher Umans}, see \cite{CohnEins},
\cite{CohnZwei} and \cite{Robinson}. Most researchers believe that an optimal
algorithm with $\mathcal O(n^2)$ runtime exists, since then no further
progress was made in finding one.

Because modern architectures have complex memory hierarchies and increasing
parallelism, performance has become a complex tradeoff, not just a simple matter
of counting flops (in this article one flop means one floating-point
operation, that means one addition is a flop and one multiplication is one flop,
too). Algorithms which make use of this technology were described by
\textsc{Paolo D'Alberto} and \textsc{Alexandru Nicolau} in \cite{Alberto}. An
also well known method is \emph{Tiling}: The normal algorithm can be speeded up
by a factor of two by using a six loop implementation that blocks submatrices so
that the data passes through the L1 Cache only once.

\subsection{The algorithm}
Let $A$ and $B$ be ($m2^k \times m2^k$) matrices. To compute $C:=AB$ let
\begin{gather*}
A=\begin{bmatrix}
A_{11} & A_{12}\\
A_{21} & A_{22}
\end{bmatrix}\quad\text{and}\quad
B=\begin{bmatrix}
B_{11} & B_{12}\\
B_{21} & B_{22}
\end{bmatrix},
\end{gather*}
where $A_{ij}$ and $B_{ij}$ are matrices of order $m2^{k-1}$. With the following
auxiliary matrices
\begin{xalignat*}{2}
H_1 &:= (A_{11} + A_{22})(B_{11} + B_{22}) &
H_2 &:=(A_{21} + A_{22})B_{11} \\
H_3 &:=A_{11}(B_{12} - B_{22}) &
H_4 &:= A_{22}(B_{21} - B_{11}) \\
H_5 &:=(A_{11} + A_{12})B_{22} &
H_6 &:=(A_{21} - A_{11})(B_{11} + B_{12})\\
H_7 &:= (A_{12} - A_{22})(B_{21} + B_{22})
\end{xalignat*}
we get
\begin{gather*}
C = \begin{bmatrix}
H_1 + H_4 - H_5 + H_7 & H_3 + H_5\\
H_2 + H_4 & H_1 + H_3 - H_2 + H_6
\end{bmatrix}.
\end{gather*}
This leads to recursive computation. In the last step of the recursion the
products of the $(m \times m$) matrices
are computed with the naive algorithm (straight forward implementation
with three \texttt{for}-loops, we will call it $\mathcal N$).

\subsection{Properties of the algorithm}\label{Property}
The algorithm $\mathcal S$ needs (see \cite{StrassenGauss})
\begin{gather*}
F_{\mathcal S}(m,k) := 7^km^2(2m+5) - 4^k 6 m^2
\end{gather*}
flops to
compute the
product of two square matrices of order $m2^k$. The naive algorithm $\mathcal
N$
needs $n^3$ multiplications and $n^3 - n^2$ additions to compute the product of
two ($n \times n$) matrices. Therefore $F_{\mathcal N} (n) = 2n^3 - n^2$. In the
case $n=2^p$ the algorithm $\mathcal S$ is better than $\mathcal N$ if
$F_{\mathcal S}(1,p) < F_{\mathcal N}(2^p)$, which is the case iff $p \geqslant
10$. But if we use algorithm $\mathcal S$ only for matrices of order at least
$2^{10}=1024$, we get a new problem:

\begin{lemma}
The algorithm $\mathcal S$ needs $\frac{17}{3}(7^p - 4^p)$ units of memory (we
write ``uom'' in short) (number of \texttt{float}s or
\texttt{double}s) to compute
the
product of two $(2^p \times 2^p)$ matrices.
\end{lemma}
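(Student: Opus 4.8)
The plan is to track the auxiliary storage allocated by one invocation of $\mathcal S$ on a pair of $(2^p \times 2^p)$ matrices and then to assemble these local counts along the recursion tree. First I would count the working matrices needed at the top level. Writing out the seven auxiliary products, the $A$-side requires the five combinations $A_{11}+A_{22}$, $A_{21}+A_{22}$, $A_{11}+A_{12}$, $A_{21}-A_{11}$ and $A_{12}-A_{22}$ (the products $H_3$ and $H_4$ use the single blocks $A_{11}$ and $A_{22}$ directly), and symmetrically the $B$-side requires the five combinations $B_{11}+B_{22}$, $B_{12}-B_{22}$, $B_{21}-B_{11}$, $B_{11}+B_{12}$ and $B_{21}+B_{22}$. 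Together with the seven matrices that hold $H_1,\dots,H_7$, this is $10+7=17$ matrices, each of order $2^{p-1}$ and hence of size $4^{p-1}$ uom. So a single invocation allocates $17\cdot 4^{p-1}$ uom of local temporaries before handing its seven subproducts to recursive calls on $(2^{p-1}\times 2^{p-1})$ matrices.

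Next I would set up the recurrence. Since every invocation at the level of order $2^p$ spawns exactly seven invocations at order $2^{p-1}$, and the base case $p=0$ (scalar multiplication) allocates nothing, the total number $M(p)$ of uom satisfies
\begin{equation*}
M(p) = 7\,M(p-1) + 17\cdot 4^{p-1}, \qquad M(0)=0.
\end{equation*}
Equivalently, summing the local contribution $17\cdot 4^{p-1-d}$ over the $7^d$ nodes at depth $d$ gives $M(p)=\sum_{d=0}^{p-1}7^d\,17\cdot 4^{p-1-d}$, which I would evaluate as a geometric series in the ratio $7/4$.

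To finish, I would solve the recurrence. The homogeneous part is $A\cdot 7^p$, and a particular solution of the form $B\cdot 4^p$ forces $4B=7B+17$, i.e.\ $B=-17/3$; imposing $M(0)=0$ gives $A=17/3$, so $M(p)=\tfrac{17}{3}(7^p-4^p)$, as claimed. The geometric-sum route yields the same value, since $17\cdot 4^{p-1}\cdot\tfrac{4}{3}\bigl((7/4)^p-1\bigr)=\tfrac{17}{3}(7^p-4^p)$.

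I expect the only real obstacle to be the bookkeeping in the first paragraph: one must argue that exactly the ten listed sums and seven product matrices are simultaneously live in the straightforward recursive implementation, and that the factor $7$ (rather than a factor $1$ coming from peak-memory reuse) is the correct multiplier in the recurrence, i.e.\ that the count measures the total storage allocated across the whole recursion tree rather than the peak. Once the constant $17$ and the $7$-fold branching are justified, the remaining computation is routine.
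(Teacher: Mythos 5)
Your proof is correct and follows essentially the same route as the paper: both set up the recurrence $M(n)=7M(n/2)+17(n/2)^2$ with $M(1)=0$ (equivalently, sum the per-level costs as a geometric series) and solve it to obtain $\frac{17}{3}(7^p-4^p)$. The only difference is how the constant $17$ is itemized --- the paper charges $15(n/2)^2$ for the eight blocks $A_{ij}$, $B_{ij}$ and the seven $H_\ell$, plus $2(n/2)^2$ for the two input arguments handed to the recursive calls, whereas you charge ten simultaneously stored linear combinations plus the seven $H_\ell$; the two implementation models happen to yield the same per-level cost of $17(n/2)^2$, so the recurrence and the result agree.
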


\begin{proof}
Let $M(n)$ be the number of uom used by $\mathcal S$ to compute the product of
matrices of order $n$. The matrices $A_{ij}$, $B_{ij}$ and $H_\ell$ need $15
(n/2)^2$ uom. During the computation of the auxiliary matrices $H_\ell$ we need
$7 M(n/2)$ uom and $2(n/2)^2$ uom as input arguments for the recursive calls
of $\mathcal S$. Therefore we get $M(n) = 7M(n/2) + (17/4)n^2$. Together with
$M(1)=0$ this yields to $M(2^p) = \frac{17}{3}(7^p - 4^p)$.
\end{proof}

As an example, if we compute the product of two $(2^{10} \times 2^{10})$
matrices (represented as \texttt{double} arrays) with $\mathcal S$ we need
$8\cdot \frac{17}{3}(7^{10} -
4^{10})$ bytes, i.e.~$12.76$ gigabytes of memory. That is an enormous amount of
RAM for such a problem instance. \textsc{Brice Boyer} et
al.~(\cite{Boyer}) solved this problem
 with fully in-place schedules of \textsc{Strassen-Winograd}'s
algorithm (see the following paragraph), if the input matrices can be
overwritten.

\textsc{Shmuel Winograd} optimized \textsc{Strassen}'s algorithm. The
\textsc{Strassen-Wino\-grad} algorithm (described in \cite{Probert}) needs only
$15$ additions and subtractions, whereas $\mathcal S$ needs $18$.
\textsc{Winograd} had also shown (see \cite{Winograd}), that the minimum number
of multiplications required to multiply $2 \times 2$ matrices is $7$.
Furthermore, \textsc{Robert Probert} (\cite{ProbertAdd}) showed that
$15$ additive operations are necessary and sufficient to multiply two $2 \times
2$ matrices with $7$ multiplications.

Because of the bad properties of $\mathcal S$ with full recursion and large
matrices, one can study the idea to use only one step of recursion. If $n$ is
even and we use one step of recursion of $\mathcal S$ (for the remaining
products we use $\mathcal N$) the ratio of this operation count to that
required by $\mathcal N$ is (see \cite{Huss})
\begin{gather*}
\frac{7n^3 + 11n^2}{8n^3-4n^2} \quad \xrightarrow{n\to\infty} \quad \frac78.
\end{gather*}
Therefore the multiplication of two sufficiently large matrices using
$\mathcal S$ costs approximately $12.5$ \% less than using $\mathcal N$.

Using the technique of stopping the recursion in the \textsc{Strassen-Winograd} algorithm early, there are
well known implementations, as for example
\begin{itemize}
 \item on the Cray-2 from \textsc{David Bailey} (\cite{Bailey}),
 \item GEMMW from \textsc{Douglas} et al. (\cite{Douglas}) and
 \item a routine in the IBM ESSL library routine (\cite{IBM}).
\end{itemize}

\subsection{The aim of this work}
\textsc{Strassen}'s algorithm can only be used for $(m2^k \times m2^k)$
matrices. For arbitrary $(n \times n)$ matrices one has to add zero rows and
columns to the given matrices (see the next section) to use \textsc{Strassen}'s
algorithm. \textsc{Strassen} gave a strategy of how to set $m$ and $k$ for
arbitrary $n$ to ensure $n\leq m2^k$. In this paper we study the number $d$ of
additional zero rows and columns and the influence on the number of flops used
by the algorithm in the worst case, best case and in the average case.\par
It is known (\cite{Probert}), that these parameters are not optimal. We only
study the number $d$ and the additional work caused by the bad
parameters of \textsc{Strassen}. We give no better strategy of how to set $m$
and $k$, and we do not analyze other strategies than the one from
\textsc{Strassen}.

\section{Strassen's parameter for matrices of arbitrary order}

\noindent Algorithm $\mathcal S$ uses recursions to multiply matrices of order
$m2^k$. If $k=0$ then $\mathcal S$ coincides with the naive algorithm $\mathcal
N$. So we will only consider the case where $k>0$. To use $\mathcal S$ for
arbitrary $(n\times n)$ matrices $A$ and $B$ (that
means for arbitrary $n$) we have to embed them into matrices $\tilde A$ and
$\tilde B$ which are both $(\tilde n \times \tilde n)$ matrices with $\tilde n
:= m2^k \geqslant n$. We do this by adding $\ell:=\tilde n - n$ zero rows and
colums to $A$ and $B$. This results in
\begin{gather*}
\tilde A\tilde B =
\begin{bmatrix}
A & 0^{n\times \ell}\\
0^{\ell\times n} & 0^{\ell\times \ell}
\end{bmatrix}
\begin{bmatrix}
B & 0^{n\times \ell}\\
0^{\ell\times n} & 0^{\ell\times \ell}
\end{bmatrix}
=:\tilde C,
\end{gather*}
where $0^{k\times j}$ denotes the $(k\times j)$ zero matrix. If we delete the
last $\ell$ columns and rows of $\tilde C$ we get the result $C=AB$.

We now focus on how to find $m$ and $k$ for arbitrary $n$ with $n
\leqslant m2^k$. An optimal but purely theoretical choice is
\begin{gather*}
(m^*,k^*) = \arg \min
\{F_{\mathcal S}(m,k): (m,k)\in\nn\times\nn_0, n\leqslant
m2^k\}.
\end{gather*}
Further methods of finding $m$ and $k$ can be
found in \cite{Probert}. We choose another way. 
According to \textsc{Strassen}'s proof of the main result of
\cite{StrassenGauss}, we define
\begin{gather}\label{Param}
k := \lfloor \log_2 n \rfloor - 4 \qquad \text{and} \qquad m:= \lfloor n2^{-k}
\rfloor + 1,
\end{gather}
where $\lfloor x \rfloor$ denotes the largest integer not greater than $x$. We
define $\tilde n := m2^k$ and study the relationship between $n$ and $\tilde
n$. The results are:
\begin{itemize}
	\item \emph{worst case}: $\tilde n \leqslant (17/16)n$,
\item \emph{best case}: $\tilde n \geqslant n+1$ and
\item \emph{average case}: $\tilde n \approx (49/48)n$.
\end{itemize}~

\subsection{Worst case analysis}

\begin{theorem}\label{THM:worst}
Let $n\in\nn$ with $n\geqslant 16$. For the parameters \eqref{Param} and
$m2^k=\tilde n$ we have
\begin{gather*}
\tilde n \leq \frac{17}{16} n.
\end{gather*}
If $n$ is a power of two, we have $\tilde n = \tfrac{17}{16}n$.
\end{theorem}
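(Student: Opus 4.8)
The plan is to reduce the whole statement to the single elementary inequality $\lfloor x \rfloor \le x$, applied to $x = n2^{-k}$, together with the bound $2^{\lfloor \log_2 n\rfloor} \le n$. First I would set $p := \lfloor \log_2 n \rfloor$, so that $k = p - 4$. Because $n \ge 16 = 2^4$ we have $p \ge 4$ and hence $k \ge 0$; this is exactly where the hypothesis $n \ge 16$ enters, and it guarantees that $2^k$ is a positive integer and that $m = \lfloor n2^{-k}\rfloor + 1$ is a well-defined positive integer.

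Next I would estimate $\tilde n$ directly from its definition. Since $\lfloor n2^{-k}\rfloor \le n2^{-k}$, we get $m \le n2^{-k} + 1$, and therefore
\[
\tilde n = m\,2^k \le (n2^{-k} + 1)\,2^k = n + 2^k .
\]
So the entire question becomes how large the ``overhead'' $2^k$ can be relative to $n$. The key step is then to bound $2^k$: from $p = \lfloor \log_2 n\rfloor$ we have $2^p \le n$, whence
\[
2^k = 2^{p-4} = \frac{2^p}{16} \le \frac{n}{16}.
\]
Combining the two displays yields $\tilde n \le n + 2^k \le n + n/16 = \tfrac{17}{16}n$, which is the claimed inequality.

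For the equality statement I would simply specialize to $n = 2^p$. Then $n2^{-k} = 2^{p-(p-4)} = 2^4 = 16$ is an integer, so $\lfloor n2^{-k}\rfloor = 16$ and $m = 17$; consequently $\tilde n = 17\cdot 2^{p-4} = \tfrac{17}{16}\,2^p = \tfrac{17}{16}n$, showing that the bound is attained precisely in this case.

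I do not expect a genuine obstacle: the proof is short, and the only points that demand care are the \emph{direction} of the floor inequality (one wants $\lfloor x\rfloor \le x$, which gives an \emph{upper} bound on $m$ and hence on $\tilde n$) and the observation that equality in $2^p \le n$ holds exactly when $n$ is a power of two, which is what forces the extremal case. It is also worth noting in passing that $\lfloor x\rfloor + 1 > x$ gives $m > n2^{-k}$ and thus $\tilde n > n$, so the zero-padding embedding of the previous section is always valid.
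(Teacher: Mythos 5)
Your proof is correct, but it takes a genuinely different and more economical route than the paper. You reduce everything to the two elementary facts $\lfloor x\rfloor\le x$ (applied to $x=n2^{-k}$, giving $\tilde n=m2^k\le n+2^k$) and $2^{\lfloor\log_2 n\rfloor}\le n$ (giving $2^k\le n/16$), and then check the power-of-two case by direct substitution. The paper instead fixes $\alpha$ with $2^\alpha\le n<2^{\alpha+1}$ and partitions the interval $I^\alpha=\{2^\alpha,\dots,2^{\alpha+1}-1\}$ into sixteen subintervals $I^\alpha_j$ on which $k$ and $m$ are constant; it computes the constant gap $\tilde n-n=2^{\alpha-4}$ at the left endpoint of each $I^\alpha_j$ and maximizes the ratio $\tilde n^\alpha_j/n^\alpha_j$ over $j$, finding the maximum $17/16$ at $j=1$. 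Your argument is shorter and isolates exactly where the hypothesis $n\ge 16$ and the extremal case $n=2^p$ enter; what the paper's heavier machinery buys is the explicit subinterval structure $I^\alpha_j$, which is reused verbatim in the best-case theorem (where the minimum of $\tilde n-n$ at the right endpoints gives $\tilde n\ge n+1$) and in the average-case computation of $\ee\tilde n$. Your closing observation that $m>n2^{-k}$ forces $\tilde n>n$ is also sound and is essentially the paper's best-case bound.
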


\begin{proof}
For fixed $n$ there is exactly one $\alpha \in \nn$ with $2^\alpha \leq n <
2^{\alpha + 1}$.
We define $I^\alpha := \{2^\alpha, \ldots, 2^{\alpha +1}-1\}$. Because of
\eqref{Param} for each $n\in I^\alpha$ the value of $k$ is
\begin{gather*}
k = \lfloor \log_2 n \rfloor - 4 = \log_2 2^\alpha - 4 = \alpha - 4.
\end{gather*}
Possible values for $m$ are
\begin{gather*}
m = \left\lfloor n\frac1{2^{\alpha - 4}} \right\rfloor +1
= \left\lfloor n\frac{16}{2^\alpha} \right\rfloor + 1 =: m(n).
\end{gather*}
$m(n)$ is increasing in $n$ and $m(2^{\alpha})=17$ and $m(2^{\alpha+1})=33$.
Therefore we have $m\in \{17,\ldots,32\}$. For each $n\in I^\alpha$ one of the
following inequalities holds:
\begin{gather*}
\begin{array}{lcrcccl}
(I_1^\alpha) && 2^\alpha = 16 \cdot 2^{\alpha-4} & \leq & n & < & 17\cdot
2^{\alpha-4}\\
(I_2^\alpha) && 17\cdot 2^{\alpha-4} & \leq & n & < & 18\cdot 2^{\alpha-4} \\
&&&& \vdots\\
(I_{16}^\alpha) && 31\cdot 2^{\alpha-4} & \leq & n & < & 32\cdot 2^{\alpha-4} =
2^{\alpha + 1}.
\end{array}
\end{gather*}
Note that $I^\alpha= \biguplus_{j=1}^{16}I_j^\alpha$. It follows, that
for all $n \in \nn$ there exists exactly one $\alpha$ with $n \in I^\alpha$ and
for all $n\in I^\alpha$ there is exactly one $j$ with $n\in I^\alpha_j$.

Note that for all $n\in I^\alpha_j$ we have $k=\alpha-4$ and $m(n) = j+16$. If
we only focus on $I^\alpha_j$ the difference $\tilde n - n$ has its maximum at
the lower end of $I^\alpha_j$ ($\tilde n$ is constant and $n$ has its minimum
at the lower end of $I^\alpha_j$). On $I^\alpha_j$ the value of $\tilde n$ and
the minimum of $n$ are
\begin{gather*}
  \tilde n_j^\alpha := (16+j)\cdot 2^{\alpha - 4}
  \qquad \text{and} \qquad
  n_j^\alpha := (15 + j) \cdot 2^{\alpha - 4}.
\end{gather*}
Therefore the difference $d_j^\alpha := \tilde n_j^\alpha - n_j^\alpha$ is
constant:
\begin{gather*}
  d_j^\alpha = (16+j) \cdot 2^{\alpha - 4} - (15+j) \cdot 2^{\alpha - 4} =
2^{\alpha - 4}
  \quad \text{for all } j.
\end{gather*}
To set this in relation with $n$ we study
\begin{gather*}
  r_j^\alpha := \frac{\tilde n_j^\alpha}{n_j^\alpha}
  = \frac{n_j^\alpha + d_j^\alpha}{n_j^\alpha} = 1 +
  \frac{d_j^\alpha}{n_j^\alpha}
  = 1 + \frac{2^{\alpha - 4} }{n_j^\alpha}.
\end{gather*}
Finally $r_j^\alpha$ is maximal, iff $n_j^\alpha$ is minimal, which is the case
for $n_1^\alpha = 16\cdot 2^{\alpha - 4}=2^\alpha$. With  $r_1^\alpha = 17/16$
we get $\tilde n \leq \frac{17}{16}n$, which completes the proof.
\end{proof}

Now we want to use the result above to take a look at the number of flops we
need for
$\mathcal S$ in the worst case. The worst case is $n=2^p$ for any $4 \leq p \in
\nn$. An optimal decomposition (in the sense of minimizing the number ($\tilde n
- n$) of zero rows and columns we add to the given matrices) is $m=2^j$ and
$k=p-j$, because $m2^k=2^j2^{p-j}=2^p=n$. Note that these parameters $m$ and $k$
have nothing to do with equation \eqref{Param}. Lets have a look on the
influence of $j$:

\begin{lemma}
Let $n=2^p$. In the decomposition $n=m2^k$ we use $m=2^j$ and $k=p-j$. Then
$f(j):=F_{\mathcal S}(2^j,p-j)$ has its minimum at $j=3$.
\end{lemma}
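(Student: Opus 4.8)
The plan is to substitute the decomposition directly into the flop count and exploit the structure of $F_{\mathcal S}$. Writing $m = 2^j$ and $k = p - j$ in $F_{\mathcal S}(m,k) = 7^k m^2(2m+5) - 4^k 6 m^2$ and using $(2^j)^2 = 4^j$ gives
\[
f(j) = 7^{p-j} 4^j (2^{j+1} + 5) - 6\cdot 4^{p-j} 4^j .
\]
The key observation is that $4^{p-j} 4^j = 4^p$, so the subtracted term equals $6\cdot 4^p$ and is \emph{independent of $j$}. Hence minimizing $f$ is equivalent to minimizing only the first term, and after factoring out the positive constant $7^p$ this reduces to minimizing
\[
h(j) := \left(\tfrac47\right)^{j}(2^{j+1}+5) = 2\left(\tfrac87\right)^{j} + 5\left(\tfrac47\right)^{j}
\]
over the admissible integers $0 \le j \le p$ (note $m = 2^j \ge 1$ forces $j \ge 0$ and $k = p-j \ge 0$ forces $j \le p$).

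First I would record that $h$, viewed as a function of a real variable, is a sum of two positive multiples of exponentials $a^{j} = e^{j\ln a}$ with $a>0$, and is therefore strictly convex. Consequently its first-difference sequence $\Delta(j) := h(j+1)-h(j)$ is strictly increasing, so a single sign change of $\Delta$ pins down the integer minimizer uniquely: if $\Delta(2) < 0 < \Delta(3)$, then $h$ is strictly decreasing on the integers up to $j=3$ and strictly increasing afterwards, which forces the global minimum to sit at $j = 3$.

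Then I would verify the two sign conditions by exact arithmetic over the common denominators $7^{2}, 7^{3}, 7^{4}$. Concretely $h(2) = 1456/343$, $h(3) = 1344/343 = 9408/2401$ and $h(4) = 9472/2401$, so $\Delta(2) = h(3)-h(2) = -112/343 < 0$ while $\Delta(3) = h(4)-h(3) = 64/2401 > 0$, which is exactly the required sign change. Combined with the convexity argument, this establishes that $f$ attains its minimum at $j = 3$.

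The computations here are entirely routine; the only point that needs care is the reduction itself. The main (very mild) obstacle is to notice the cancellation $4^{p-j}4^{j}=4^{p}$ that makes the second term of $F_{\mathcal S}$ constant in $j$, since without it one would be tempted to analyze the difference of two genuinely $j$-dependent exponential expressions. Once that cancellation is seen, strict convexity of $h$ plus the two fraction comparisons finish the argument, and I would note in passing that $j=3$ lies in the valid range $0 \le j \le p$ (with $k = p-3 > 0$) for every $p \ge 4$, so the stated minimizer is always attained.
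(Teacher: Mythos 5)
Your proof is correct and takes essentially the same route as the paper: both reduce the problem to the sign of the first difference of $f$ after noting that the $-6\cdot 4^p$ term is independent of $j$. The paper simplifies $f(j+1)-f(j)$ to $2(4/7)^j\,7^{p-1}(2^j-7.5)$, which exhibits the single sign change at $j=3$ directly, whereas you reach the same conclusion via strict convexity of $2(8/7)^j+5(4/7)^j$ plus two exact evaluations of the difference; the two justifications are interchangeable and equally rigorous.
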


\begin{proof}
We have $f(j)=2\cdot 7^p (8/7)^j + 5\cdot 7^p (4/7)^j - 4^p 6$.
Thus
\begin{align*}
&f(j+1)-f(j)\\
&= [2\cdot 7^p (8/7)^{j+1} + 5\cdot 7^p (4/7)^{j+1} - 4^p 6] - [2\cdot 7^p
(8/7)^j + 5\cdot 7^p (4/7)^j - 4^p 6]\\
&= 2 \cdot 7^p (8/7)^j( 8/7 -1 ) + 5 \cdot 7^p (4/7)^j(
4/7 -1 )\\
&= 2 \cdot 7^p (8/7)^j \cdot 1/7 - 15 \cdot 7^p (4/7)^j \cdot 1/7 = 2 (4/7)^j
7^{p-1} (2^j-7.5).
\end{align*}
Therefore, $f(j)$ is a minimum if $j = \min\{i: 2^i-7.5 >0\} =
3$.
\end{proof}
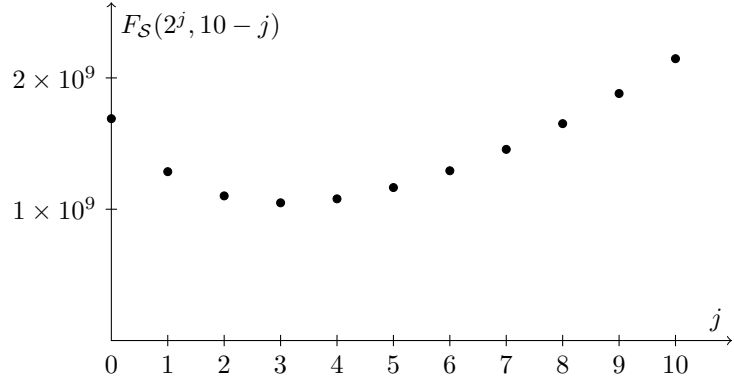
\begin{figure}
\centering
\begin{tikzpicture}[scale=0.75]
\draw[<->] (0,6) node[below right] {$F_{\mathcal S}(2^j,10-j)$} -- (0,0) --
(11,0) node[above left] {$j$}; \draw (0,-.1) node[below] {$0$};
\foreach \x in {1,...,10} {
\draw (\x,.1) -- (\x,-.1) node[below] {$\x$};
}
\draw plot[only marks, mark=*] coordinates {
(10,    5.0000)
(9.0000,    4.3820)
(8.0000,    3.8477)
(7.0000,    3.3914)
(6.0000,    3.0118)
(5.0000,    2.7139)
(4.0000,    2.5135)
(3.0000,    2.4433)
(2.0000,    2.5661)
(1.0000,    2.9958)
(0.0000,    3.9358)
};
\draw (.1,2.3294) -- (-.1,2.3294) node[left] {$1\times 10^9$};
\draw (.1,2*2.3294) -- (-.1,2*2.3294) node[left] {$2\times 10^9$};
\end{tikzpicture}
\caption{Different parameters ($m=2^j$, $k=10-j$, $n=m2^k$) to apply in the
\textsc{Strassen} algorithm for matrices of order $2^{10}$.}
\label{fig:ExampleStrassenParam}
\end{figure}
Figure \ref{fig:ExampleStrassenParam} shows that it is not optimal to use
$\mathcal S$ with full recursion in the example $p=10$. Now we study the worst
case $n=2^p$ and different sets of parameters
$m$ and $k$ for $F_{\mathcal S}(m,k)$:
\begin{enumerate}
	\item If we use equation \eqref{Param}, we get the original parameters
of
\textsc{Strassen}: $k=p-4$ and $m=17$. Therefore we define
\begin{gather*}
F_1(p) := F_{\mathcal S}(17,p-4) = 7^p \frac{39 \cdot 17^2}{7^4} -
4^p\frac{6\cdot 17^2}{4^4}.
\end{gather*}
\item Obviously $m=16$ would be a better choice, because with this we get
$m2^k=n$ (we avoid the additional zero rows and columns). Now we define
\begin{gather*}
F_2(p) := F_{\mathcal S}(16,p-4) = 7^p \frac{37 \cdot 16^2}{7^4} - 4^p6.
\end{gather*}
\item Finally we use the lemma above. With $m=8=2^3$ and $k=p-3$ we get
\begin{gather*}
F_3(p) := F_{\mathcal S}(8,p-3) = 7^p \frac{3 \cdot 64}{49} - 4^p6.
\end{gather*}
\end{enumerate}

Now we analyze $F_i$ relative to each other. Therefore we define
$f_j:=F_j/F_3$ ($j=1,2$). So we have $f_j\colon \{4,5,\ldots\} \to
\rr$, which is monotonously decreasing in $p$. With
\begin{gather*}
f_1(p) = \frac{289(4^p\cdot 2401 - 7^p\cdot 1664)}{12544(4^p\cdot 49 -
7^p\cdot 32)} \qquad
\text{and}\qquad f_2(p) = \frac{4^p\cdot7203 - 7^p\cdot4736}{147(4^p\cdot49 -
7^p\cdot32)}
\end{gather*}
we get
\begin{align*}
f_1(4) &= 3179/2624\approx 1.2115 &\lim_{p\to\infty} f_1(p)
&=3757/3136 \approx 1.1980\\
f_2(4) &= 124/123 \approx 1.00813 & \lim_{p\to\infty} f_2(p)
&=148/147 \approx 1.00680.
\end{align*}
\begin{figure}
  \centering
  \begin{tikzpicture}[scale=0.75]
    \draw[->] (0,0.75) -- (0,4) node[below right] {$f_1(p)$};
    \draw[dashed] (0,0.25) -- (0,0.75);
    \draw[->] (0,0) -- (6,0) node[above left] {$p$};
	\foreach \x in {4,6,...,20} {
		\draw (\x/3-4/3,.1) -- (\x/3-4/3,-.1) node[below] {$\x$};
	}
     \draw plot[only marks, mark=*] coordinates {
	(0,         3.1509)
	(0.3333,    2.5135)
	(0.6667,    2.1915)
	(1.0000,    2.0195)
	(1.3333,    1.9247)
	(1.6667,    1.8717)
	(2.0000,    1.8418)
	(2.3333,    1.8248)
	(2.6667,    1.8151)
	(3.0000,    1.8096)
	(3.3333,    1.8065)
	(3.6667,    1.8047)
	(4.0000,    1.8037)
	(4.3333,    1.8031)
	(4.6667,    1.8027)
	(5.0000,    1.8026)
	(5.3333,    1.8024)
    };
    \draw[dashed] (0,1.80229592) -- (5.3333,1.80229592);
    \draw (.1,1) -- (-.1,1) node[left] {$1.19$};
    \draw (.1,2) -- (-.1,2) node[left] {$1.20$};
    \draw (.1,3) -- (-.1,3) node[left] {$1.21$};
  \end{tikzpicture} ~~~~ 
  \begin{tikzpicture}[scale=0.75]
    \draw[->] (0,0.75) -- (0,4) node[below right] {$f_2(p)$};
    \draw[dashed] (0,0.25) -- (0,0.75);
      \draw[->] (0,0) -- (6,0) node[above left] {$p$};
  	\foreach \x in {4,6,...,20} {
  		\draw (\x/3-4/3,.1) -- (\x/3-4/3,-.1) node[below] {$\x$};
  	}
       \draw plot[only marks, mark=*] coordinates {
	(     0,    3.1301)
	(0.3333,    2.5027)
	(0.6667,    2.1858)
	(1.0000,    2.0165)
         (1.3333,    1.9232)
	(1.6667,    1.8711)
	(2.0000,    1.8416)
	(2.3333,    1.8249)
	(2.6667,    1.8154)
	(3.0000,    1.8099)
	(3.3333,    1.8068)
	(3.6667,    1.8051)
	(4.0000,    1.8041)
	(4.3333,    1.8035)
	(4.6667,    1.8032)
	(5.0000,    1.8030)
	(5.3333,    1.8029)
    };
    \draw[dashed] (0,1.80272109) -- (5.3333,1.80272109);
      \draw (.1,1) -- (-.1,1) node[left] {$1.006$};
      \draw (.1,2) -- (-.1,2) node[left] {$1.007$};
      \draw (.1,3) -- (-.1,3) node[left] {$1.008$};
    \end{tikzpicture}
  \caption{Comparison of \textsc{Strassen}'s parameters ($f_1$, $m=17$,
  $k=p-4$), obviously better parameters ($f_2$, $m=16$, $k=p-4$) and the
  optimal parameters (lemma, $m=8$, $k=p-3$) for the worst case $n=2^p$.
Limits of $f_j$ are dashed.}
  \label{fig:ExampleWorstRelative}
\end{figure}
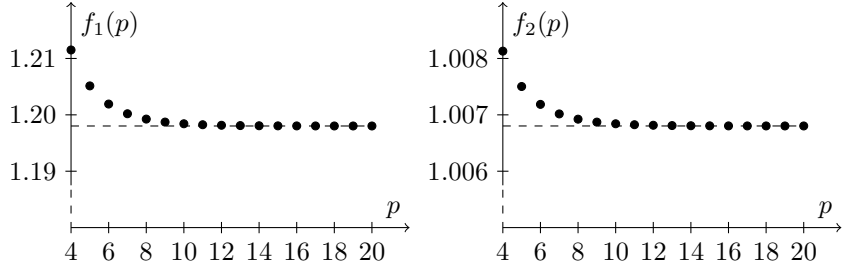

Figure \ref{fig:ExampleWorstRelative} shows the graphs of the functions $f_j$.
In conclusion, in the worst case the parameters of
\textsc{Strassen} need approx.~20 \% more flops than the optimal parameters of
the lemma.

\subsection{Best case analysis}

\begin{theorem}
Let $n\in\nn$ with $n\geqslant 16$. For the parameters \eqref{Param} and
$m2^k=\tilde n$ we have
\begin{gather*}
n+1 \leq \tilde n.
\end{gather*}
If $n=2^p\ell-1$, $\ell\in\{16,\ldots,31\}$, we have $\tilde n = n+1$.
\end{theorem}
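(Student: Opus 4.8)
The plan is to reduce both assertions to a single divisibility condition relating $n+1$ and $2^k$, and to extract that condition from an exact formula for the difference $\tilde n - n$. The key observation is that, once $k$ is fixed by \eqref{Param}, the value of $m$ is completely determined by the remainder of $n$ modulo $2^k$.

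First I would carry out division with remainder: write $n = q\,2^k + r$ with $0 \le r < 2^k$. Then the definition \eqref{Param} gives $m = \lfloor n\,2^{-k}\rfloor + 1 = q+1$, so that $\tilde n = (q+1)2^k = n + (2^k - r)$. Since $0 \le r \le 2^k-1$, the quantity $2^k - r$ lies in $\{1,\ldots,2^k\}$; in particular $\tilde n - n \ge 1$, which is exactly the first claim $n+1 \le \tilde n$. Moreover $\tilde n - n = 1$ holds precisely when $r = 2^k-1$, i.e. precisely when $2^k \mid (n+1)$. This characterization does all the work for the equality statement.

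For the equality case I would then simply verify this divisibility for $n = 2^p\ell - 1$ with $\ell \in \{16,\ldots,31\}$. Here $n+1 = 2^p\ell$, so $2^p$ certainly divides $n+1$, and it suffices to show $0 \le k \le p$. The bound $k \ge 0$ follows from $n \ge 16$ (whence $\lfloor \log_2 n\rfloor \ge 4$), and for $k \le p$ I would use only the crude estimate $n \le 31\cdot 2^p - 1 < 2^{p+5}$, which yields $\lfloor \log_2 n\rfloor \le p+4$ and hence $k = \lfloor \log_2 n\rfloor - 4 \le p$. Then $2^k \mid 2^p \mid (n+1)$, and the first step forces $\tilde n = n+1$.

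The only delicate point is the computation of $k$, and the main simplification is to avoid evaluating $\lfloor \log_2 n\rfloor$ exactly. A direct evaluation forces a case distinction: for $\ell = 16$ we have $n = 2^{p+4}-1$ and the floor drops to $p+3$ (so $k = p-1$), whereas for $\ell \ge 17$ the floor equals $p+4$ (so $k=p$). Since the argument needs only the inequality $k \le p$ rather than the exact value, the single upper bound $n < 2^{p+5}$ covers all $\ell \in \{16,\ldots,31\}$ uniformly, and the subtraction of $1$ in $n = 2^p\ell - 1$ causes no trouble.
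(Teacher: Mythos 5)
Your proof is correct, and the core identity is the same one the paper exploits: with $k$ fixed, $\tilde n=(\lfloor n2^{-k}\rfloor+1)2^k$ is the smallest multiple of $2^k$ strictly greater than $n$, so the gap $\tilde n-n$ lies in $\{1,\ldots,2^k\}$. Where you differ is in packaging and completeness. The paper reuses the interval decomposition $I^\alpha=\biguplus_j I^\alpha_j$ from the worst-case theorem and observes that on each $I^\alpha_j$ the difference is minimized (and equals $1$) at the upper endpoint; your division-with-remainder formulation $\tilde n-n=2^k-r$ says the same thing without importing that machinery, and it yields the clean characterization that equality holds precisely when $2^k\mid(n+1)$. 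More importantly, the paper's proof stops after establishing $n+1\le\tilde n$ and never actually verifies the second assertion about $n=2^p\ell-1$ (it is only implicit in the identification of the upper endpoints of the $I^\alpha_j$), whereas you prove it explicitly, and your uniform bound $n<2^{p+5}$ giving $k\le p$ is a tidy way to avoid the case split between $\ell=16$ (where $k=p-1$) and $\ell\ge 17$ (where $k=p$). The only caveat, shared with the paper's statement, is that the hypothesis $n\ge 16$ must be carried into the second claim (e.g.\ $\ell=16$, $p=0$ gives $n=15$ and is excluded), which your appeal to $k\ge 0$ handles correctly.
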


\begin{proof}
Like in Theorem \ref{THM:worst} we have for each $I_j^\alpha$
a constant value for $\tilde n_j^\alpha$ namely $2^{\alpha - 4}(16+j)$.
Therefore $n < \tilde n$ holds. The difference $\tilde n -n$ has its minimum at
the
upper end of $I^\alpha_j$. There we have $\tilde n - n = 2^{\alpha - 4}(16+j) -
(2^{\alpha - 4}(16+j) - 1)=1$. This shows $n+1 \leq \tilde n$.
\end{proof}

Let us focus on the flops we need for $\mathcal S$, again. Lets have a look at
the example $n=2^p-1$. The original parameters (see equation \eqref{Param})
for $\mathcal S$ are $k=p-5$ and $m=32$. Accordingly we define
$F(p):=F_{\mathcal S}(32,p-5)$. Because $2^p - 1\leq 2^p$ we can add $1$ zero
row and column and use the lemma from the worst case. Now we get the parameters
$m=8$ and $k=p-3$ and define $\tilde F(p) := F_{\mathcal S}(8,p-3)$. To analyze
$F$ and $\tilde F$ relative to each other we have
 \begin{gather*}
 r(p) := \frac{F(p)}{\tilde F(p)}= \frac{4^p\cdot 16807 - 7^p\cdot
11776}{4^p\cdot 16807 - 7^p\cdot
 10976}.
\end{gather*}
Note that $r\colon \{5,6,\ldots\} \to \rr$ is monotonously decreasing in $p$ and
has its
maximum at $p=5$. We get
 \begin{align*}
 r(5) &= 336/311\approx 1.08039 & \lim_{p\to\infty} r(p) =
11776/10976  \approx 1.07289.
 \end{align*}

Therefore we can say: In the best case the parameters of \textsc{Strassen} are
approx. $8$~\% worse than the optimal parameters from the lemma in the worst
case.

\subsection{Average case analysis}
With $\ee \tilde n$ we denote the expected value of $\tilde n$. We search for a
relationship like $\tilde n \approx \gamma n$ for $\gamma \in \rr$. That means
$\ee[\tilde n / n] = \gamma$.

\begin{theorem}
For the parameters \eqref{Param} of
\textsc{Strassen} $m2^k=\tilde n$ we have
\begin{gather*}
\ee \tilde n = \frac{49}{48}n.
\end{gather*}
\end{theorem}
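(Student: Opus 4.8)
The plan is to compute $\ee[\tilde n/n]$ by exploiting the same partition of $\nn$ into the intervals $I^\alpha_j$ established in the proof of Theorem \ref{THM:worst}. Recall that on each $I^\alpha_j$ the value $\tilde n^\alpha_j = (16+j)2^{\alpha-4}$ is constant, while $n$ ranges over the $2^{\alpha-4}$ integers from $n^\alpha_j = (15+j)2^{\alpha-4}$ up to $(16+j)2^{\alpha-4}-1$. First I would express the expectation as a sum over the blocks, using the fact that the structure is self-similar in $\alpha$: the ratio $\tilde n/n$ on block $I^\alpha_j$ depends only on $j$ together with the position inside the block, and the scale $2^{\alpha-4}$ cancels. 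This suggests that the ``expected value'' is really an average taken uniformly over one period $j\in\{1,\dots,16\}$ and, within each period, over the $2^{\alpha-4}$ admissible values of $n$.

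The key computation is therefore to average $\tilde n/n$ over a single representative interval structure. For a fixed $j$, the local contribution is
\begin{gather*}
\frac{1}{2^{\alpha-4}}\sum_{i=0}^{2^{\alpha-4}-1}\frac{(16+j)2^{\alpha-4}}{(15+j)2^{\alpha-4}+i}.
\end{gather*}
Rather than evaluate this harmonic-type sum directly, I expect the cleaner route is to pass to the limit $\alpha\to\infty$ (or equivalently to treat $n$ as continuously uniform on each scaled interval), so that the discrete average becomes the integral
\begin{gather*}
\frac{1}{16}\sum_{j=1}^{16}\int_{15+j}^{16+j}\frac{16+j}{x}\,dx
=\frac{1}{16}\sum_{j=1}^{16}(16+j)\bigl[\ln(16+j)-\ln(15+j)\bigr].
\end{gather*}
The main obstacle will be reconciling this with the stated clean answer $49/48$: a sum of logarithms will not collapse to a rational number, so the exact value $49/48$ must instead come from a different, exactly summable quantity — most plausibly $\ee\tilde n$ is being measured against $\ee n$ (i.e.\ the ratio of expectations rather than the expectation of the ratio), or $\tilde n$ is averaged against a uniform weighting in which $n$ itself is taken at a single representative value per block.

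Pursuing the exactly rational interpretation, I would instead average the difference $\tilde n - n$, which is tractable, and divide by the average of $n$. On $I^\alpha_j$ the difference $\tilde n - n$ takes each value in $\{1,2,\dots,2^{\alpha-4}\}$ exactly once as $n$ runs through the block, so its mean over the block is $(2^{\alpha-4}+1)/2\approx 2^{\alpha-5}$, independent of $j$. Thus $\ee[\tilde n - n]\approx 2^{\alpha-5}$, and since the typical $n$ across all blocks is of order $(3/2)2^\alpha$ (the blocks span $[2^\alpha,2^{\alpha+1})$ with midpoint $\tfrac32 2^\alpha$), I would form
\begin{gather*}
\frac{\ee\tilde n}{\ee n}=1+\frac{\ee[\tilde n-n]}{\ee n}\approx 1+\frac{2^{\alpha-5}}{\tfrac32\cdot 2^\alpha}=1+\frac{1}{48}=\frac{49}{48}.
\end{gather*}
This matches the claim exactly, so I expect the rigorous proof to formalize precisely this: show $\ee[\tilde n - n]=n/48$ by averaging the constant-difference structure over the $16$ congruence classes and the residues within each, where the $\tfrac32$ factor arises because the lower endpoints $n^\alpha_j=(15+j)2^{\alpha-4}$ average to $\tfrac{1}{16}\sum_{j=1}^{16}(15+j)2^{\alpha-4}=\tfrac{47}{2}2^{\alpha-4}=\tfrac{47}{32}2^\alpha\approx\tfrac32 2^\alpha$. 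The delicate point to get right is the exact averaging weights — whether each interval $I^\alpha_j$ is weighted by its length (equal, namely $2^{\alpha-4}$) so that the $16$ classes receive equal total mass, which is what makes the endpoints average to $\tfrac{47}{32}2^\alpha$ and forces the denominator $48$ rather than some nearby value.
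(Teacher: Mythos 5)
Your proposal is correct and follows essentially the same route as the paper: the paper likewise partitions $\{2^\alpha,\dots,2^{\alpha+1}-1\}$ into the sixteen blocks $I^\alpha_j$, computes $\ee^\alpha\tilde n = 49\cdot 2^{\alpha-5}$ and $\ee^\alpha n = \tfrac32 2^\alpha-\tfrac12$, and obtains $49/48$ as the limit of their quotient, finishing with a length-weighted average over the scales $\alpha$. Your observation that the literal expectation of the ratio $\tilde n/n$ would produce logarithms is well taken: the paper's proof silently replaces $\ee^\alpha[\tilde n/n]$ by $\ee^\alpha\tilde n/\ee^\alpha n$, which is precisely the ratio-of-expectations reading you settled on.
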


\begin{proof}
First we focus only on $I^\alpha$. We write $\ee^\alpha := \ee \vert_{I^\alpha}$
and $\ee^\alpha_j := \ee \vert_{I^\alpha_j}$ for the expected value on
$I^\alpha$ and $I^\alpha_j$, resp. We have
\begin{gather*}
\ee^\alpha \tilde n	= \frac1{16}\sum_{j=1}^{16} \ee_j^\alpha\tilde n
= \frac1{16}\sum_{j=1}^{16} \tilde n_j^\alpha
= \frac1{16}\sum_{j=1}^{16} (j+16)2^{\alpha - 4}
= 2^{\alpha - 5}\cdot49.
\end{gather*}
Together with
$\ee^\alpha n = \frac12[(2^{\alpha + 1} - 1) + 2^\alpha]=2^\alpha + 2^{\alpha -
1} - 1/2$, we get
\begin{gather*}
\rho(\alpha) 	:= \ee^\alpha \left[\frac{\tilde n}{n}\right]
= \frac{\ee^\alpha \tilde n}{\ee^\alpha n}
= \frac{2^{\alpha - 5}\cdot49}{2^\alpha + 2^{\alpha - 1} - 1/2}.
\end{gather*}
Now we want to calculate $\ee_k := \ee\vert_{U(k)}[\tilde n / n]$, where
$U(k):=\biguplus_{j=0}^k I^{4+j}$ by using the values $\rho(j)$. Because of
$|I^5|=2|I^4|$ and $|I^4\cup I^5|=3|I^4|$ we have
$\ee_1 = \frac13 \rho(4)+ \frac23 \rho(5)$.
With the same argument we get
\begin{gather*}
\ee_k = \sum_{j=4}^{4+k} \beta_j \rho(j) \qquad \text {where} \qquad \beta_j =
\frac{2^{j-4}}{2^{k+1}-1}.
\end{gather*}
Finally we have
\begin{align*}
\ee \left[\frac{\tilde n}{n}\right] &= \lim_{k\to\infty} \ee_k
= \lim_{k\to\infty} \Bigg( \sum_{j=4}^{4+k}\beta_j \rho(j) \Bigg)\\
&= \lim_{k\to\infty} \Bigg( \frac{49}{2^{k+1}-1} \sum_{j=4}^{4+k} \frac{2^{2j -
9}}{2^j + 2^{j - 1} - 1/2}\Bigg)
= \frac{49}{48},
\end{align*}
what we intended to show.
\end{proof}
Compared to the worst case ($\tilde n \leq \frac{17}{16}n$, $17/16 = 1 + 1/16$),
note that $49/48 = 1+1/48 = 1+\frac{1}{3}\cdot \frac{1}{16}$.

\section{Conclusion}
\noindent\textsc{Strassen} used the parameters $m$ and $k$ in the form
\eqref{Param} to
show that his matrix multiplication algorithm needs less than $4.7n^{\log_2 7}$
flops. We could show in this paper, that these parameters cause an additional
work of approx.~20 \%  in the worst case in comparison to the optimal strategy
for the worst case. This is the main reason for the search for better
parameters, like in \cite{Probert}.

\end{document}